\documentclass[a4wide, reqno, 11pt]{amsart}
\usepackage[english]{babel}
\usepackage{amsmath}
\usepackage{amssymb}
\usepackage{enumerate}
\usepackage{ifthen}
\usepackage{hyperref}
\usepackage{color}
\newcommand{\margnote}[1]{
\ifthenelse{\boolean{shownotes}}%
{\marginpar{\raggedright\tiny\texttt{#1}}}%
{}%
}

\newcommand{\hole}[1]{
\ifthenelse{\boolean{shownotes}}%
{\begin{center} \fbox{ \rule {.25cm}{0cm}
\rule[-.1cm]{0cm}{.4cm} \parbox{.85\textwidth}{\begin{center}
\texttt{#1}\end{center}} \rule {.25cm}{0cm}}\end{center}}
{}
}
\newtheorem{theorem}{Theorem}[section]

\newtheorem{lemma}[theorem]{Lemma}

\newtheorem{definition}[theorem]{Definition}

\theoremstyle{remark}
\newtheorem{remark}[theorem]{Remark}

\newcommand{\e}{\varepsilon}		       
\newcommand{\R}{\mathbb{R}}

\newcommand{\D}{\mathrm{D}}
\newcommand{\ue}{u^{\varepsilon}}
\newcommand{\pe}{p^{\varepsilon}}

\newcommand{\g}{g^{\varepsilon}}
\newcommand{\dive}{\mathop{\mathrm {div}}}



\numberwithin{equation}{section}

\begin{document}

\title[Suitable solutions by artificial compressibility]{On the
  construction of suitable weak solutions to the 3D Navier-Stokes
  equations in a bounded domain by an artificial compressibility
  method}

\author[Berselli]{Luigi C. Berselli}
\address[L. C. Berselli]{\newline
Dipartimento di Matematica\\ I-56127, Pisa, Italy}
\email[]{\href{berselli@dma.unipi.it}{berselli@dma.unipi.it}}

\author[Spirito]{Stefano Spirito}
\address[S. Spirito]{\newline
GSSI - Gran Sasso Science Institute \\ 67100, L'Aquila, Italy}
\email[]{\href{stefano.spirito@gssi.infn.it}{stefano.spirito@gssi.infn.it}}

\subjclass[2010]{Primary: 35Q30, Secondary: 35A35, 76M20.}

\keywords{Navier-Stokes Equations, Suitable Weak Solution, Numerical
  Schemes}

\begin{abstract}
  In this paper we will prove that suitable weak solutions of three
  dimensional Navier-Stokes equations in bounded domain can be
  constructed by a particular type of artificial compressibility
  approximation.
\end{abstract}

\maketitle
\section{Introduction}
\label{sec:Intro}
Let $\Omega\subset\R^3$ be a bounded domain with smooth boundary
$\Gamma=\partial\Omega$ and $T>0$ be a fixed real number. We consider
the three-dimensional Navier-Stokes equations with unit viscosity and
zero external force:
\begin{equation}
  \label{eq:nse}
\begin{aligned}
  \partial_t u-\Delta u+(u\cdot\nabla)\,u+\nabla p&=0\qquad\textrm{in
  }\Omega\times(0,T), 
  \\
  \nabla\cdot u&=0\qquad\textrm{in }\Omega\times(0,T),
\end{aligned}
\end{equation}                                                                                                   
and the Navier (slip without friction) boundary conditions for $u$,
namely
\begin{equation}
  \label{eq:bc}
  \begin{aligned}
    u\cdot n=0 \qquad\textrm{
    on }\Gamma\times(0,T), 
    \\
    n\cdot \D u\cdot\tau=0 \qquad\textrm{
    on }\Gamma\times(0,T), 
  \end{aligned}
\end{equation}
where $\D u$ stands for the symmetric part of $\nabla u$, $n$ is the
unit normal vector on $\Gamma$, while $\tau$ denotes any unit
tangential vector on $\Gamma$ (Recall also that for incompressible
fluids $\Delta u=2\dive \D u$, to compare with~\eqref{eq:app} in the
compressible approximation).

The system~\eqref{eq:nse} is coupled with a divergence-free and
tangential to the boundary initial
datum
\begin{equation}
  \label{eq:id}
  u(x,0)=u_0(x)\qquad\textrm{on  }\Omega\times\{t=0\}.
\end{equation}
For the initial value boundary problem~\eqref{eq:nse}-\eqref{eq:bc} it
is well-known that, for any tangential and divergence-free vector
field $u_0\in L^{2}(\Omega)$, there exists at least a global weak
solution in the sense of Leray-Hopf,
see~\cite{AR2014,Bei2006,Bei2007c,BMR2007,SS1973,XX2007} for the
generalization of the classical results of Hopf~\cite{Hop1951}, which
was obtained for the Dirichlet case.
\begin{remark}
  In the sequel it will be enough to assume that $\Gamma\in C^{1,1}$
  and, to avoid technical complications, we also assume that the
  domain is simply connected and that it cannot be generated by
  revolution around a given axis (this could be relevant in the steady
  case, when dealing with the symmetric deformation tensor and Korn
  type inequalities), see~\cite{Bei2004}.
\end{remark}
On the other hand, the uniqueness and the regularity of the weak
solutions represents a problem still open and very far to be
understood. The best regularity result which is available for weak
solutions of the three-dimensional Navier-Stokes equations is the
Caffarelli-Kohn-Nirenberg theorem~\cite{CKN1982}, which asserts that
the velocity is smooth out of a set of parabolic Hausdorff dimension
zero. For example it implies that there are not space-time curves of
singularity in the velocity. However, the Caffarelli-Kohn-Nirenberg
theorem holds only for a particular subclass of weak solutions, called
in literature ``\textit{suitable weak solutions},'' see
Section~\ref{sec:Pre} for the precise definitions. Roughly speaking a
suitable weak solution is a particular Leray-Hopf weak
solution which, in addition to the global energy inequality, satisfies
in the sense of distributions also the following entropy-type
inequality:
\begin{equation}
  \label{eq:gen}
  \partial_{t}\left(\frac{1}{2}|u|^{2}\right)+\nabla\cdot\left(\left(\frac{1}{2}|u|^{2}+
      p\right)u\right)- 
  \Delta\left(\frac{1}{2}|u|^{2}\right)+ 
  |\nabla u|^{2}
\leq 0. 
\end{equation}
Inequality~\eqref{eq:gen} is often called in literature {\em
  generalized energy inequality} and is the main tool to prove the
partial regularity theorems~\cite{CKN1982,LS1999}. Since uniqueness of
Leray-Hopf weak solutions (within the same class of solutions) is not
known, each method used to prove existence of weak solutions may
possibly lead to a different weak solution. This in turn implies that
it is a very interesting problem to understand which ones of the different
approximation methods (in particular those important in applications
and in the construction of numerical solutions), provide existence of
suitable weak solutions.  This question has been considered for many
approximation methods~\cite{Bei1985a,Bei1985b,Bei1986,BCI2007}, and it
is worth to point out that the solutions constructed by the Leray
method~\cite{Ler1934} and the Leray-$\alpha$ variant turn out to be
suitable. Particularly important are the results obtained by
Guermond~\cite{Gue2006,Gue2007} where it was proved that some special
Galerkin methods lead to suitable weak solutions. However,
understanding whether solutions obtained by general Galerkin methods
(especially those ones based on Fourier series expansion) are suitable
has been posed in~\cite{Bei1985b} and it is still not completely
solved.

In the spirit of understanding whether certain methods used in the
numerical approximation produce suitable weak solutions (problem
emphasized in~\cite{Gue2008}) in this note we prove that weak
solutions obtained as a limit of a particular \textit{artificial
  compressibility} method are so.  More precisely, the approximation
system we consider is the following:
\begin{equation}
  \label{eq:app}
  \begin{aligned}
    \partial_t\ue+(\ue\cdot\nabla)\,\ue+\frac{1}{2}\ue\dive\ue-2\,{\dive\D\ue}
    +\nabla\pe&=0\qquad\textrm{in }\Omega\times(0,T),
    \\
    -\e\,\Delta\pe+\dive\ue&=0\qquad\textrm{in }\Omega\times(0,T),
\end{aligned}
\end{equation}
is coupled with Navier boundary conditions
\begin{equation}
  \label{eq:bc1}
  \begin{aligned}
    \ue\cdot n=0 \qquad\textrm{
    on }\Gamma\times(0,T), 
    \\
    n\cdot \D \ue\cdot\tau=0 \qquad\textrm{
    on }\Gamma\times(0,T), 
  \end{aligned}
\end{equation}
and a tangential to the boundary initial datum
\begin{equation}
  \label{eq:id1}
  \ue(x,0)=u_0^{\e}(x)\qquad\textrm{on }\Omega\times\{t=0\}.
\end{equation}
Concerning the pressure, it is natural to impose on it Neumann
boundary conditions,
\begin{equation}
  \label{eq:btre}
  \frac{\partial\pe}{\partial n}=0\qquad\textrm{ on }\Gamma\times(0,T),
\end{equation}
and normalization by zero average over the domain:
\begin{equation}
  \label{eq:preav}
  \int_{\Omega}\pe(x,t)\,\,dx=0\qquad \text{a.e. }t\in(0,T). 
\end{equation}
\begin{remark}
  The nonlinear terms in~\eqref{eq:app} can be also be written in the
  following equivalent divergence-type form
  \begin{equation}
    \label{eq:def:nl}
    nl(\ue,\ue):=  (\ue\cdot\nabla)\,\ue+\frac{1}{2}\,\ue\dive\ue=
    \dive(\ue\otimes\ue)-\frac{1}{2}\,\ue\dive\ue,  
  \end{equation}
where as usual $(a\otimes b)_{ij}:=a_i b_j$, for $i,j=1,2,3$. The
factor $\frac12\,\ue\dive\ue$, which vanishes in the incompressible limit,
is needed to keep the usual energy estimates.
\end{remark}
We recall that artificial compressibility methods were first studied
by Chorin~\cite{Cho1968,Cho1969} and Temam~\cite{Tem1968} and are
relevant in numerical analysis since they relax the divergence-free
constraint, which has a high computational cost. See also the recent
results in~\cite{Rus2012} for the Cauchy problem with $-\e\,\Delta
\pe$ replaced by $\e\,\pe$ (as in~\cite{Bei2004}) in
equations~\eqref{eq:app}. In addition, the case where the term
$-\e\,\Delta\pe$ in equations~\eqref{eq:app} is replaced by
$\e\,\partial_t\pe$ has been considered by different authors,
see~\cite{Tem1969,DM2006}.  Moreover, the convergence when $\e\to 0$
to a suitable weak solution in the whole space was considered
in~\cite{DS2011} for the scheme with the time derivative of the
pressure.

Here, we focus especially on the fact that we have a domain with solid
boundaries and the coupling with the Navier boundary conditions makes
possible to obtain appropriate estimates on the pressure. The main
theorem of this note is the following. See Section~\ref{sec:Pre} for
the notations and the main definitions.
\begin{theorem}
  \label{teo:main}
  Let $\Omega$ be bounded and of class $C^{1,1}$.  Let
  $\{(\ue,\pe)\}_{\e}$ be a sequence of weak solutions of the initial
  value boundary problem~\eqref{eq:app}-\eqref{eq:btre} with
  \begin{equation*}
      \dive\ue_0=0\qquad\text{in }\Omega,
  \end{equation*}
  and $\{\ue_0\}_\e$ bounded uniformly in $H^1_\tau(\Omega)$ such that 
 \begin{equation*}
 \ue_0\to u_0\qquad\textrm{ strongly in }L^{2}(\Omega).
 \end{equation*}
 Then, the
  following $\e$-independent estimate on the pressure holds true
\begin{equation}
  \label{eq:m2}
\exists\, C>0:\qquad  \|\pe\|_{ L^{\frac{5}{3}}((0,T)\times\Omega)}\leq
C,\qquad \forall\,\e>0.
\end{equation}
Moreover, there exists $u$, a Leray-Hopf weak solution
of~\eqref{eq:nse}-\eqref{eq:bc}, and an associated pressure $p\in
L^{\frac{5}{3}}((0,T)\times\Omega)$ such that
\begin{align}
  &\ue\overset{*}{\rightharpoonup} u\qquad\textrm{ weakly* in
  }L^{\infty}(0,T;L^{2}(\Omega)),\label{eq:m3}
  \\
  &\nabla\ue\rightharpoonup \nabla u\qquad\textrm{ weakly in
  }L^{\infty}(0,T;L^{2}(\Omega)),
\label{eq:m4}
  \\
  &\pe\rightharpoonup p\qquad\textrm{ weakly in
  }L^{\frac{5}{3}}((0,T)\times\Omega).
\label{eq:m5}
\end{align}
Finally, $(u,p)$ is a
suitable weak solution to the Navier-Stokes
equations~\eqref{eq:nse}-\eqref{eq:bc}-\eqref{eq:id}.
\end{theorem}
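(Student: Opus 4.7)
The plan proceeds in four phases: (i) uniform energy and dissipation estimates; (ii) the $\e$-independent $L^{5/3}$ pressure bound~\eqref{eq:m2}; (iii) compactness and passage to the limit to identify $(u,p)$ as a Leray--Hopf weak solution; (iv) a local energy identity for $\ue$ whose limit produces the generalized energy inequality~\eqref{eq:gen}. For (i), I would test the momentum equation in~\eqref{eq:app} by $\ue$. The specific form of the nonlinearity gives $nl(\ue,\ue)\cdot\ue=\tfrac12\dive(|\ue|^2\ue)$, which integrates to zero since $\ue\cdot n=0$ on $\Gamma$. Using $\dive\ue=\e\Delta\pe$ together with $\partial_n\pe=0$, the pressure contribution becomes $\int\nabla\pe\cdot\ue\,dx=\e\|\nabla\pe\|_{L^2}^2$, while the Navier boundary condition eliminates the boundary remainder in $-\int 2\dive\D\ue\cdot\ue\,dx$, leaving $2\|\D\ue\|_{L^2}^2$. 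The resulting identity
\begin{equation*}
\frac{1}{2}\frac{d}{dt}\|\ue\|_{L^2}^2+2\|\D\ue\|_{L^2}^2+\e\|\nabla\pe\|_{L^2}^2=0,
\end{equation*}
combined with Korn's inequality (applicable under the geometric assumptions on $\Omega$), supplies uniform bounds on $\ue$ in $L^\infty_tL^2_x\cap L^2_tH^1_x\hookrightarrow L^{10/3}_{t,x}$ and on $\sqrt\e\,\nabla\pe$ in $L^2_{t,x}$. In particular $\dive\ue=\e\Delta\pe\to 0$ in $\mathcal D'$, since $\e\nabla\pe=\sqrt\e\cdot(\sqrt\e\nabla\pe)\to 0$ strongly in $L^2$.

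For (ii), the most delicate phase, I would take the divergence of the momentum equation and substitute $\dive\ue=\e\Delta\pe$ to obtain the parabolic-type equation $(I+\e\partial_t-2\e\Delta)\,\Delta\pe=-\dive(nl(\ue,\ue))$ with Neumann data. Its formal $\e=0$ limit is the classical Leray pressure equation $-\Delta p=\dive\dive(u\otimes u)$ with zero Neumann condition, whose $L^{5/3}$-theory (Neumann elliptic regularity on $\Omega\in C^{1,1}$) together with $\ue\otimes\ue\in L^{5/3}_{t,x}$ yields the base estimate. The remaining $\e$-dependent contributions to the source---notably the compressibility piece $\tfrac12\dive(\ue\,\dive\ue)=\tfrac\e2\dive(\ue\,\Delta\pe)$---all carry a factor of $\sqrt\e$ and can be absorbed using the $L^2$-bound on $\sqrt\e\,\nabla\pe$ from (i), producing~\eqref{eq:m2} uniformly in $\e$.

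For (iii), with~\eqref{eq:m2} in hand the momentum equation bounds $\partial_t\ue$ in, say, $L^{5/3}_tW^{-1,5/3}_x$, so Aubin--Lions gives $\ue\to u$ strongly in $L^2_{t,x}$ along a subsequence, while the uniform bounds supply~\eqref{eq:m3}--\eqref{eq:m5}. The limit is divergence-free because $\dive\ue\to 0$ in $\mathcal D'$. Passage to the limit in the weak formulation is then standard: strong convergence handles the convective term, weak convergence handles the pressure gradient, and the compressibility correction $\tfrac12\ue\,\dive\ue$ tested against a smooth field $\varphi$ becomes $-\tfrac\e2\iint\nabla(\ue\cdot\varphi)\cdot\nabla\pe$ after integration by parts using $\partial_n\pe=0$, which is $O(\sqrt\e)$ and vanishes. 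This identifies $(u,p)$ as a Leray--Hopf weak solution of~\eqref{eq:nse}--\eqref{eq:bc}.

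For (iv), I would test the momentum equation by $\phi\ue$ with $\phi\in C_c^\infty((0,T)\times\Omega)$, $\phi\ge 0$. The identity $nl(\ue,\ue)\cdot\ue=\tfrac12\dive(|\ue|^2\ue)$ localizes to $\iint nl(\ue,\ue)\cdot\phi\ue=-\tfrac12\iint|\ue|^2\ue\cdot\nabla\phi$; integrating the dissipative term by parts (with Navier BC killing the boundary) produces $2\iint\phi|\D\ue|^2$ plus a lower-order commutator $2\iint\ue\cdot(\D\ue\,\nabla\phi)$; and the pressure contribution $-\iint\pe(\ue\cdot\nabla\phi+\phi\,\dive\ue)$, upon substituting $\dive\ue=\e\Delta\pe$ and integrating by parts using $\partial_n\pe=0$, decomposes into the standard pressure--velocity product plus extra compressibility pieces which are either nonnegative on the left-hand side (an additional $\e\iint\phi|\nabla\pe|^2$) or controlled by the $L^2$-bounds on $\sqrt\e\,\nabla\pe$ and $\sqrt\e\,\pe$ (the latter by Poincaré). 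Passing $\e\to 0$: strong $L^3_{t,x}$-convergence of $\ue$ (by interpolating $L^\infty_tL^2_x$ with $L^2_tL^6_x$) handles the cubic term; weak $L^{5/3}$-convergence of $\pe$ against the strongly convergent $\ue\cdot\nabla\phi$ handles the pressure--velocity term; weak lower semicontinuity of $\iint\phi|\D\ue|^2$ turns the equality into an inequality; and converting $2\phi|\D u|^2$ to $\phi|\nabla u|^2$ via integration by parts using $\dive u=0$ recovers the local form of~\eqref{eq:gen}. The principal obstacle is phase (ii): the $\e$-independent $L^{5/3}$ pressure bound underlies every subsequent passage to the limit, and it is precisely the step where the algebraic structure of~\eqref{eq:app}, the Navier condition on $\ue$, and the Neumann condition on $\pe$ must all be used in concert.
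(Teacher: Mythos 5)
Your overall architecture (uniform energy bounds, an $\e$-independent $L^{5/3}$ pressure estimate, Aubin--Lions compactness, passage to the limit in a local energy identity for the approximate system) coincides with the paper's, and phases (i), (iii), (iv) are essentially the paper's argument. The genuine gap is in phase (ii), which you correctly identify as the crux. Taking the divergence of the momentum equation does yield $(I+\e\partial_t-2\e\Delta)\Delta\pe=-\dive nl(\ue,\ue)$ with Neumann data, but your plan then treats this as the Leray elliptic equation plus absorbable corrections, and the term $\e\,\partial_t\Delta\pe$ is left untreated: it is not a source term carrying a factor of $\sqrt{\e}$, and there is no a priori bound on $\partial_t\pe$ anywhere in the energy estimates (they control only $\sqrt{\e}\,\nabla\pe$ in $L^2$, i.e.\ $\dive\ue=\e\Delta\pe$ in $L^2$, not its time derivative). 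Nor is there an off-the-shelf regularity theory, uniform in $\e$, for the operator $I+\e\partial_t-2\e\Delta$ acting on $\Delta\pe$ with Neumann data that would convert $\dive nl(\ue,\ue)\in L^{5/3}_t W^{-2,5/3}_x$ into an $L^{5/3}_{t,x}$ bound on $\pe$. As written, phase (ii) is a heuristic for why $5/3$ is the right exponent, not a proof.

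The paper resolves exactly this difficulty with a specific device: it tests the momentum equation with $\nabla\g$, where $\g$ solves the Neumann problem $\Delta\g=|\pe|^{\alpha-2}\pe-\int_\Omega|\pe|^{\alpha-2}\pe\,dx$ with $\alpha=\tfrac53$. Writing the Helmholtz decomposition $\ue=P\ue+\nabla q$ and using $-\e\Delta\pe+\dive\ue=0$ together with uniqueness for the Neumann problem gives $q=\e\,\pe$, so the dangerous time-derivative term becomes an exact derivative of a signed quantity:
\begin{equation*}
\int_0^t\int_\Omega\partial_t\ue\cdot\nabla\g\,dx\,ds
=-\e\int_0^t\int_\Omega\partial_t\pe\,|\pe|^{\alpha-2}\pe\,dx\,ds
=-\frac{\e}{\alpha}\Big(\|\pe(t)\|_\alpha^\alpha-\|\pe(0)\|_\alpha^\alpha\Big),
\end{equation*}
which moves to the left-hand side of the estimate; the initial term vanishes precisely because $\dive\ue_0=0$ forces $\pe(0)=0$. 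Note that your proposal never invokes the hypothesis $\dive\ue_0=0$, which is a symptom of the missing step. The remaining terms, $nl(\ue,\ue)\cdot\nabla\g$ and $2\D\ue:\nabla^2\g$, are then handled via the Calder\'on--Zygmund bound $\|\nabla^2\g\|_{5/2}\le C\|\pe\|_{5/3}^{2/3}$ and Young's inequality --- this is where your (correct) observation that $\ue\otimes\ue\in L^{5/3}_{t,x}$ enters rigorously. Until the $\e\,\partial_t\Delta\pe$ term is dealt with by some such mechanism, phase (ii), and hence the whole proof, is incomplete.
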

The existence of weak solutions of~\eqref{eq:app}-\eqref{eq:btre} is
quite standard and can be easily proved by smoothing suitably the non
linear terms.  The main obstacle to prove Theorem~\ref{teo:main} is to
get $\e$-independent estimates on the pressure. Since we do not have
the divergence-free constraint we cannot deduce independent estimates
on the pressure by using the classical elliptic equations associated
to the pressure. Moreover, also methods based on the semigroup theory
as in~\cite{SvW1986} seem not directly working here, since the
approximation system does not immediately fit in that abstract
framework. We will get the necessary a priori estimates on the
pressure from the momentum equations as in the case of compressible
Navier-Stokes equations~\cite{FP2000b,Lio1998}.  This approach has
been introduced in~\cite{BMR2007} to study a class of non-Newtonian
fluids and used also in~\cite{CL2014} to address the analysis of
models for the evolution of the turbulent kinetic energy, but without
considering the question of the local energy inequality.  In
particular, we point out that it is in the pressure estimate that we
need to employ the Navier boundary conditions, since they allow us to
control the various term arising in the integration by parts.
\subsection*{Plan of the paper}
The plan of the paper is the following. In Section~\ref{sec:Pre} we
recall the main definitions and the main tools we will use in the
proof of Theorem~\ref{teo:main}. In Section~\ref{sec:ape} we prove the
main a priori estimates needed in the proof of Theorem~\ref{teo:main}
and finally in Section~\ref{sec:proof} we prove
Theorem~\ref{teo:main}.
\section{Preliminaries}
\label{sec:Pre}
In this section we fix the notations and we recall the main
definitions and tools we will use to prove Theorem~\ref{teo:main}.
Given $\Omega\subset\R^3$, the space of $C^{\infty}$ functions or
vector fields on $\Omega$ tangential to the boundary will be denoted
by $C^{\infty}_{\tau}(\Omega)$.  We will denote with $L^{p}(\Omega)$
the standard Lebesgue spaces and with $\|\cdot\|_p$ their norm. The
classical Sobolev space is denoted by $W^{1,2}(\Omega)$ and its norm
by $\|\cdot\|_{k,p}$ and when $k=1$ and $p=2$ we denote
$W^{k,p}(\Omega)$ with $H^{1}(\Omega)$. As usual we denote the
$L^2$-scalar product by $(\,.\,,\,.\,)$. Finally, we denote by
$L^{2}_{\sigma,\tau}(\Omega)$ and $H^{1}_{\sigma,\tau}(\Omega)$ the
space of the tangential to the boundary and divergence-free vector
fields respectively in $L^{2}(\Omega)$ and $H^{1}(\Omega)$. By
``$\,\cdot\,$'' we denote the scalar product between vectors, while by
``$\,:\,$'' we denote the complete contraction of second order
tensors. Finally, given a Banach space we denote by $C_w([0,T]; X)$
the space of continuous function from the interval $[0,T]$ to the
space $X$ endowed with the weak topology.

Let us start by giving the precise definition of Leray-Hopf weak
solution for the initial value boundary
problem~\eqref{eq:nse}-\eqref{eq:bc}.
\begin{definition}
  \label{def:lhws}
  A vector field $u$ is a Leray-Hopf weak solution to the
  Navier-Stokes equations~\eqref{eq:nse}-\eqref{eq:bc}-\eqref{eq:id}
  if the following proprieties hold true:
  \\
  1) The velocity $u$ satisfies $u\in
  C_w(0,T;L^{2}_{\sigma,\tau}(\Omega))\cap L^{2}(0,T;H_{\sigma,
    \tau}^{1}(\Omega))$;
  \\
  2) The velocity $u$ satisfies the following integral identity
\begin{equation*}
  \begin{aligned}
    \int_{0}^{T}\int_{\Omega}\Big(u\cdot \partial_t\phi-\nabla
    u:\nabla\phi-(u\cdot\nabla)\,u\cdot\phi\Big)\,dx ds{-}\int_0^t\int_{\Gamma}u\cdot
    \nabla n\cdot \phi\,dS ds
    \\
    =-\int_{\Omega}u_0\cdot \phi(0)\,dx,
  \end{aligned}
\end{equation*}
for all smooth vector fields $\phi$, divergence-free,  tangential to the
  boundary, and such that $\phi(T)=0$;
\\
3)  The velocity $u$ satisfies the global energy inequality 
\begin{equation}
  \label{eq:ei}
\frac{1}{2}  \|u(t)\|_{2}^2+\int_0^t\|\nabla
  u(s)\|_{2}^2\,ds-\int_0^t\int_{\Gamma}u\cdot\nabla n \cdot
  u\,dS ds\leq \frac{1}{2}\|u_0\|_{2}^2. 
\end{equation}
\end{definition}
\begin{remark}
In the above definition and in the sequel, we recall that 
\begin{equation*}
\int_{\Gamma}u\cdot\nabla n \cdot
  u\,dS:=  \int_{\Gamma}\sum_{i,j=1}^3 u_i u_j\frac{\partial 
      n_i}{\partial x_j}\,dS.
\end{equation*}
and we also recall that the boundary condition~\eqref{eq:bc} are
encoded in the weak formulation, see also~\cite{IP2006}.
\end{remark}
The next definition we recall is that of suitable weak solutions.
\begin{definition}
\label{def:suitable-NSE}
A pair $(u,p)$ is a suitable weak solution to the Navier-Stokes
equations~\eqref{eq:nse}-\eqref{eq:bc}-\eqref{eq:id} if the following
properties hold true:
\\
1) $u$ is a Leray-Hopf weak solution with associated a pressure $p$ such that 
\begin{equation}
p\in L^{\frac{5}{3}}((0,T)\times\Omega);
\end{equation}
\\
2) The generalized energy inequality holds true
  \begin{equation}
    \label{eq:generalized_energy_inequality}
    \begin{aligned}
      \int_0^T\int_\Omega|\nabla u|^2\phi\,dx ds
      &\leq\int_{0}^{T}\int_\Omega\frac{|\ue|^{2}}{2}\big(\phi_{t}+\Delta\phi\big)
      \,dx  ds
      \\
      &\qquad +\Big(\ue\frac{|\ue|^{2}}{2}+\pe\ue\Big)\cdot\nabla\phi\,dx ds,
\end{aligned}
\end{equation}
    for all non-negative $\phi\in C^{\infty}_{c}((0,T)\times\Omega)$.
\end{definition}
We want to point out that the real difference between the Leray-Hopf
weak solutions and the suitable weak solutions relies in the local
energy inequality. Indeed, it is always possible to associate to a
Leray-Hopf weak solutions $u$ a pressure $p$ (modulo arbitrary
functions of time) which belongs to the space
$L^{\frac{5}{3}}((0,T)\times\Omega)$. This is standard in a setting
without physical boundaries by using the Stokes system. In the case of
Dirichlet boundary conditions this was first proved by Sohr and von
Wahl in~\cite{SvW1986}. Finally, in the case of Navier boundary
conditions this can be deduced again by the elliptic equations
associated to the pressure $p$, see~\cite{Ber2010b}.  On the other
hand, it is not known how the deduce the generalized energy
inequality~\eqref{eq:gen} since the regularity of a Leray-Hopf weak
solution is not enough to justify the chain rule in the time
derivative term and in the non linear term.

We pass now to the precise definitions concerning the compressible
approximation we will consider.
\begin{definition}
  \label{def:weak-solution-compressible}
  The couple $(\ue,\pe)$ is a weak solution to the compressible
  approximation~\eqref{eq:app}-\eqref{eq:bc1}-\eqref{eq:id1}-\eqref{eq:btre}-\eqref{eq:preav} 
  if:
\\ 
1) The velocity and pressure $(\ue,\pe)$ satisfy
    \begin{equation*}
      \begin{aligned}
        &\ue\in L^{\infty}(0,T;L^{2}(\Omega))\cap
        L^{2}(0,T;H^{1}_\tau(\Omega)),
        \\
        &\sqrt{\e}\,\nabla\pe\in L^{2}((0,T)\times\Omega);
      \end{aligned}
    \end{equation*}
\\
2) The velocity and pressure $(\ue,\pe)$ satisfy the following integral identity
  \begin{equation*}
    \begin{aligned}
      \int_0^T\int_\Omega\Big[-\ue\cdot\partial_t\phi+nl(\ue,\ue)\cdot\phi+
      2\D\ue:\D\phi+\nabla\pe\cdot\phi
      \\
      +\e\,\nabla\pe\cdot\nabla\psi+\dive\ue\,\psi\Big]\,dx ds
      =\int_\Omega\ue_0\cdot\phi(0)\,dx, 
    \end{aligned}
  \end{equation*}
  for all smooth vector fields $\phi$ tangential to the
  boundary, such that $\phi(T)=0$ and for all smooth scalar fields
  $\psi$ with zero mean value;
\\
3) The velocity and pressure $(\ue,\pe)$ satisfy the global energy inequality
\begin{equation}
      \label{eq:eiapp0}
      \begin{aligned} 
        \frac{1}{2}\|\ue(t)\|^2&
        +\int_{0}^{t}\Big[2\,\|\D\ue\|^2
        +\e\,\|\nabla\pe\|^2\Big]\,ds
        \leq\frac{1}{2}\|\ue_0\|^2\qquad \forall\,t\geq0;
      \end{aligned}
    \end{equation}
4) The velocity and pressure $(\ue,\pe)$ satisfy the local energy inequality
      \begin{equation}
        \label{eq:genapp0}
        \begin{aligned}
          &\int_{0}^{T}\int_\Omega\Big(2|\D\ue|^{2}+\e|\nabla\pe|^2\Big)\phi\,dx
          ds
          \leq\int_{0}^{T}\int_\Omega\frac{|\ue|^{2}}{2}\big(\phi_{t}+\Delta\phi\big)\,dx ds
          \\
          &+\int_0^{T}\int_\Omega\Big(\ue\frac{|\ue|^{2}}{2}+\pe\ue-\e\,\pe\nabla\pe+\ue\,\dive\ue\Big)\cdot
          \nabla\phi\,dx ds
          \\
          &\qquad+\int_0^{T}\int_\Omega u\otimes u:\nabla\phi\,dx ds,
        \end{aligned}
      \end{equation}
    for all non-negative $\phi\in C^{\infty}_{c}((0,T)\times\Omega)$.
\end{definition}
\begin{remark}
  The global and local energy estimates can be rewritten in a more
  useful form by performing some integration by parts. Observe that
  for any $v\in H^1_\tau(\Omega)$ it holds (cf.~\cite[Sec.~2]{Bei2004})
  that
  \begin{equation*}
    2\int_\Omega\D v:\D v\,dx=\|\nabla v\|^2+\|\dive v\|^2-
    \int_{\Gamma}v\cdot\nabla n \cdot
    v\,dS.
  \end{equation*}
  Moreover, by a standard compactness argument
  (see~\cite[Lem.~2.3]{Bei2004}), it follows that there exists
  $c=c(\Omega)$ such that
  \begin{equation*}
    2\int_\Omega \D v:\D v\,dx\geq c\|\nabla v\|^2\qquad \forall\, v\in H^1_\tau(\Omega), 
  \end{equation*}
  and the energy inequality~\eqref{eq:eiapp0} can be rewritten either as follows
    \begin{equation}
      \label{eq:eiapp}
      \begin{aligned} 
        \frac{1}{2}\|\ue(t)\|^2
        +\int_{0}^{t}\Big[ \|\nabla\ue\|^2&+\|\dive\ue\|^2
        +\e\|\nabla\pe\|^2
        \\
        &-\int_{\Gamma}\ue\cdot\nabla
        n\cdot \ue\,dS\Big] \,ds
        \leq\frac{1}{2}\|\ue_0\|^2,
      \end{aligned}
    \end{equation}
or even as 
    \begin{equation}
      \label{eq:eiapp1}
      \begin{aligned} 
        \frac{1}{2}\|\ue(t)\|^2
        +\int_{0}^{t}\Big[ c\|\nabla\ue\|^2+ \e\|\nabla\pe\|^2\Big]\,ds
        \leq\frac{1}{2}\|\ue_0\|^2.
      \end{aligned}
    \end{equation}
Concerning the local energy inequality~\eqref{eq:genapp0}, with several integration by
parts we get the following equivalent formulation (which is more
similar to the standard one for the incompressible case), 
      \begin{equation}
        \label{eq:genapp}
        \begin{aligned}
          &\int_{0}^{T}\int_\Omega\Big(|\nabla\ue|^{2}+|\dive\ue|^2+\e|\nabla\pe|^2\Big)\phi\,dx ds
          \leq\int_{0}^{T}\int_\Omega\frac{|\ue|^{2}}{2}(\phi_{t}+\Delta\phi)\,dx ds
          \\
          &+\int_0^{T}\int_\Omega\Big(\ue\frac{|\ue|^{2}}{2}+\pe\ue-\e\,\pe\nabla\pe-\ue
          \,\dive\ue\Big)\cdot \nabla\phi\,dx ds,
        \end{aligned}
      \end{equation}
      for all non-negative $\phi\in
      C^{\infty}_{c}((0,T)\times\Omega)$.
\end{remark}
The results in~\cite{BMR2007}, when specialized to the case $r=2$
imply the following theorem and the local energy inequality follows
from the improved regularity of the pressure $\nabla \pe\in
L^2((0,T)\times\Omega)$, which is valid (but not uniform) for all
$\e>0$.
\begin{theorem}
  Let be given $\ue_0\in L^2_\tau(\Omega)$, then there exists a weak
  solution to the compressible
  approximation~\eqref{eq:app}-\eqref{eq:bc1}-\eqref{eq:id1}-\eqref{eq:btre}-\eqref{eq:preav}. 
\end{theorem}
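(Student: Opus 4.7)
The plan is to follow a Faedo--Galerkin scheme in the spirit of the construction in~\cite{BMR2007}, specialised to the Newtonian case $r=2$. Fix $\e>0$ and choose a Galerkin basis $\{w_k\}\subset H^1_\tau(\Omega)$ (for instance the eigenfunctions of the Stokes-type operator with Navier boundary conditions) for the velocity, together with a basis $\{\varphi_k\}\subset H^1(\Omega)\cap L^2_0(\Omega)$ (eigenfunctions of the Neumann Laplacian with zero mean) for the pressure. Writing $u^n=\sum_{k\le n}c_k^n(t)w_k$ and $p^n=\sum_{k\le n}d_k^n(t)\varphi_k$, I would project the momentum equation onto $\mathrm{span}(w_1,\dots,w_n)$ and the elliptic equation onto $\mathrm{span}(\varphi_1,\dots,\varphi_n)$. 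The latter is purely algebraic in the $d_k^n$ and can be solved in terms of $u^n$; after a standard mollification of the convective term (to be removed together with the Galerkin parameter at the end), what remains is a locally solvable ODE system for the coefficients $c_k^n$.

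The a priori estimates are driven by the algebraic structure of $nl$. Testing the finite-dimensional momentum equation against $u^n$, the identity
\begin{equation*}
\int_\Omega nl(u^n,u^n)\cdot u^n\,dx=0
\end{equation*}
holds thanks to $u^n\cdot n=0$ on $\Gamma$, precisely because of the corrective factor $\tfrac12 u\dive u$ highlighted in~\eqref{eq:def:nl}. Testing the elliptic equation against $p^n$ gives $\e\|\nabla p^n\|^2=-\int p^n\dive u^n$, and since $\int\nabla p^n\cdot u^n=-\int p^n\dive u^n$, adding the two identities yields
\begin{equation*}
\tfrac12\tfrac{d}{dt}\|u^n\|^2+2\|\D u^n\|^2+\e\|\nabla p^n\|^2=0.
\end{equation*}
Combined with the Korn-type inequality recalled in the remark after Definition~\ref{def:weak-solution-compressible}, this produces global-in-time bounds for $u^n$ in $L^\infty_tL^2_x\cap L^2_tH^1_\tau$ and for $\sqrt{\e}\,\nabla p^n$ in $L^2_tL^2_x$. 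A dual-space estimate on $\partial_t u^n$ deduced from the momentum equation gives Aubin--Lions compactness, so along subsequences $u^n\to\ue$ strongly in $L^2_tL^2_x$, $\nabla u^n\rightharpoonup\nabla\ue$, $\nabla p^n\rightharpoonup\nabla\pe$ weakly in $L^2$. This is enough to pass to the limit in the weak formulation (and to remove the mollification of $nl$), producing items 1)--3) of Definition~\ref{def:weak-solution-compressible}.

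The main obstacle is the local energy inequality 4). For each fixed $\e>0$, the pressure bound $\sqrt{\e}\,\nabla\pe\in L^2$ upgrades (non-uniformly in $\e$) to $\nabla\pe\in L^2((0,T)\times\Omega)$, so that the pair $(\ue,\pe)$ carries enough regularity to test the momentum equation against $\phi\ue$ with $\phi\in C^\infty_c((0,T)\times\Omega)$, $\phi\ge 0$; this step is justified rigorously at the Galerkin level, where $u^n$ is smooth in $x$ and Lipschitz in $t$. Integration by parts yields the time term $-\int\tfrac{|\ue|^2}{2}\phi_t$, the viscous contribution $\int 2|\D\ue|^2\phi$ together with boundary-type remainders of the form $\int\tfrac{|\ue|^2}{2}\Delta\phi$ and $\int 2\D\ue\cdot\ue\cdot\nabla\phi$, the convective piece $-\int\ue\tfrac{|\ue|^2}{2}\cdot\nabla\phi$ (the corrective $\tfrac12\ue\dive\ue$ factor being what makes $nl$ integrate to a perfect divergence after multiplication by $\ue$), and the pressure piece $-\int\pe\ue\cdot\nabla\phi$; testing the elliptic equation against $\pe\phi$ transforms the residual divergence--pressure coupling into $\e\int|\nabla\pe|^2\phi$ and $-\e\int\pe\nabla\pe\cdot\nabla\phi$. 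The delicate point is then passing to the Galerkin/mollification limit: strong $L^2$-convergence of $u^n$, together with the $\e$-dependent bound on $\nabla p^n$, handles all cubic and product terms on the right-hand side, while Fatou-type lower semicontinuity of the quadratic terms $\int|\D u^n|^2\phi$ and $\e\int|\nabla p^n|^2\phi$ converts the limiting equality into the inequality~\eqref{eq:genapp0}. The cancellations needed to identify exactly the terms in~\eqref{eq:genapp0}, and verifying that no Galerkin commutator survives the limit, are the technical heart of the argument.
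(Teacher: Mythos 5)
Your proposal takes essentially the same route as the paper, which proves this theorem simply by invoking the Galerkin construction of~\cite{BMR2007} specialized to $r=2$ and observing that the local energy inequality~\eqref{eq:genapp0} follows from the improved (non-uniform in $\e$) regularity $\nabla\pe\in L^2((0,T)\times\Omega)$, exactly as you do. Your sketch correctly fleshes out that citation — in particular the cancellation $\int_\Omega nl(u^n,u^n)\cdot u^n\,dx=0$ from the tangency of $u^n$ and the use of $\pe\phi$ as a test function in the elliptic equation to produce the terms $\e\int|\nabla\pe|^2\phi$ and $-\e\int\pe\nabla\pe\cdot\nabla\phi$ — so no further comparison is needed.
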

Finally, we recall the classical Aubin-Lions Lemma which will be
useful to obtain compactness in time in the passage to the limit
from~\eqref{eq:app} to~\eqref{eq:nse}.
\begin{lemma}
\label{lem:al}
Let $X$, $B$, and $Y$ be reflexive Banach spaces. For $\e>0$
let $\{\ue\}_\e$ be a family of functions uniformly bounded in
$L^{p}(0,T;X)$ with $p\geq 1$ and let $\{\partial_t\ue\}_{\e}$ be
uniformly bounded in $L^{r}(0,T;Y)$. with $r>1$.  If $X$ is
compactly embedded in $B$ and $B$ is continuously embedded in $Y$,
then $\{\ue\}_{\e}$ is relatively compact in $L^{p}(0,T;B)$.
\end{lemma}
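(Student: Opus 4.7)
The plan is to combine Ehrling's interpolation inequality with the vector-valued Riesz--Fréchet--Kolmogorov compactness criterion (equivalently, Simon's theorem). Uniform boundedness of $\{u^\e\}$ in $L^p(0,T;B)$ follows immediately from the assumed $L^p(0,T;X)$ bound and the continuous embedding $X\hookrightarrow B$, so the real task is to establish uniform translation-equicontinuity in $L^p(0,T;B)$.

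The first step is Ehrling's lemma: since $X$ embeds compactly into $B$ and $B$ continuously into $Y$, a standard contradiction argument (normalize a counterexample sequence in $B$, extract a $B$-convergent subsequence via compact embedding, and conclude the limit is $0$ in $Y$, hence in $B$) yields that for every $\eta>0$ there exists $C_\eta>0$ with
\begin{equation*}
\|v\|_B \le \eta\,\|v\|_X + C_\eta\,\|v\|_Y \qquad\forall\,v\in X.
\end{equation*}

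The second step is a translation estimate. Fix $h>0$ and set $w^\e_h(t):=u^\e(t+h)-u^\e(t)$ for $t\in(0,T-h)$. Applying Ehrling pointwise and integrating in time gives
\begin{equation*}
\|w^\e_h\|_{L^p(0,T-h;B)} \le \eta\,\|w^\e_h\|_{L^p(0,T-h;X)} + C_\eta\,\|w^\e_h\|_{L^p(0,T-h;Y)}.
\end{equation*}
The $X$-term is bounded by $2\eta\,\|u^\e\|_{L^p(0,T;X)}\le C\,\eta$, uniformly in $\e$. For the $Y$-term, since $r>1$ the derivative $\partial_t u^\e$ lies in $L^1(0,T;Y)$, so the identity
\begin{equation*}
w^\e_h(t) = \int_t^{t+h} \partial_s u^\e(s)\,ds \quad\text{in } Y
\end{equation*}
holds for a.e.\ $t$, and Hölder in time gives $\|w^\e_h(t)\|_Y\le h^{1-1/r}\|\partial_s u^\e\|_{L^r(0,T;Y)}\le M\,h^{1-1/r}$. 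Hence $\|w^\e_h\|_{L^p(0,T-h;Y)}\le T^{1/p} M\,h^{1-1/r}$. Combining yields
\begin{equation*}
\|w^\e_h\|_{L^p(0,T-h;B)} \le C_1\,\eta + C_\eta\, C_2\, h^{1-1/r},
\end{equation*}
uniformly in $\e$. Given $\sigma>0$, first choose $\eta$ with $C_1\eta<\sigma/2$ and then $h$ small enough that the second term is below $\sigma/2$; this proves equi-translation-continuity in $L^p(0,T;B)$.

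Finally, the vector-valued Riesz--Fréchet--Kolmogorov criterion, applied on the bounded interval $(0,T)$, tells us that uniform boundedness together with uniform $L^p$-translation continuity imply relative compactness in $L^p(0,T;B)$. The main obstacle is the verification of Ehrling's lemma (which crucially uses the compact embedding $X\hookrightarrow B$) together with the mildly technical step of ensuring that $u^\e$ admits an absolutely continuous $Y$-valued representative so that the fundamental theorem of calculus may be invoked in $Y$; both are classical once the hypotheses $p\ge 1$, $r>1$ and the embedding chain $X\hookrightarrow B\hookrightarrow Y$ are fixed.
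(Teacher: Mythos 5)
The paper offers no proof of this lemma at all: it is recalled as the classical Aubin--Lions(--Simon) compactness result and used as a black box, so there is no in-paper argument to compare yours against; your proposal has to stand on its own. Its architecture --- Ehrling's inequality $\|v\|_B\le \eta\|v\|_X+C_\eta\|v\|_Y$, a translation estimate obtained by writing $u^{\e}(t+h)-u^{\e}(t)=\int_t^{t+h}\partial_s u^{\e}\,ds$ in $Y$ and applying H\"older with the $L^r(0,T;Y)$ bound on the derivative, followed by a compactness criterion in $L^p(0,T;B)$ --- is exactly the standard Simon route, and the first two steps are carried out correctly, including the remark that one must first pass to the absolutely continuous $Y$-valued representative.

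The gap is in the final step. For an infinite-dimensional $B$, uniform boundedness in $L^p(0,T;B)$ together with uniform translation equicontinuity does \emph{not} imply relative compactness in $L^p(0,T;B)$: take $B=\ell^2$ and the constant-in-time functions $u^n(t)\equiv e_n$, which are bounded and exactly translation invariant yet admit no convergent subsequence. The vector-valued Riesz--Fr\'echet--Kolmogorov criterion (Simon's Theorem~1) requires, in addition to translation equicontinuity, that the time averages $\int_{t_1}^{t_2}u^{\e}(t)\,dt$ form a relatively compact subset of $B$ for every $0<t_1<t_2<T$; only in the finite-dimensional (scalar) case is this automatic from boundedness. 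In your setting the missing hypothesis is available: by H\"older, $\bigl\|\int_{t_1}^{t_2}u^{\e}\,dt\bigr\|_X\le (t_2-t_1)^{1-1/p}\,\|u^{\e}\|_{L^p(0,T;X)}$ is bounded uniformly in $\e$, and the compact embedding $X\hookrightarrow B$ then yields the required relative compactness of the averages in $B$. So the proof is completed by one additional line, but that line is essential --- the compact embedding must be invoked a second time, beyond its role inside Ehrling's lemma --- and the compactness criterion as you stated it is false. (A minor caveat: this Kolmogorov-type criterion presupposes $p<\infty$; for $p=\infty$ the correct conclusion is compactness in $C([0,T];B)$, which is in fact the only case where the hypothesis $r>1$ is genuinely needed.)
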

\section{Some estimates on the pressure independent of $\e$}
\label{sec:ape}
In this section we prove the $\e$-independent estimate~\eqref{eq:m2}
for the pressure, which represents the most relevant technical part to
show the convergence towards a suitable weak solution. 
\begin{lemma}
\label{lem:est3}
Let $(\ue, \pe)$ be a weak solution of
system~\eqref{eq:app}-\eqref{eq:id1}-\eqref{eq:btre}-\eqref{eq:preav}. Then,
there exists a constant $C>0$, independent of $\e$, such that
\begin{equation*}
  \sup_{0< t< T}\e\,\|\pe(t)\|_{\frac{5}{3}}^{\frac{5}{3}}+
  \int_0^T\|\pe(s)\|_{\frac{5}{3}}^{\frac{5}{3}}\,ds \leq C. 
\end{equation*}
\end{lemma}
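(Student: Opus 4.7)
The plan, in the spirit of the compressible Navier--Stokes theory, is to derive an identity for $\int_{0}^{T}\|\pe\|_{5/3}^{5/3}\,dt$ by testing the momentum equation against a carefully chosen gradient field, and to recognise the time-derivative term as an exact time derivative of $\|\pe\|_{5/3}^{5/3}$ thanks to the auxiliary relation $\dive\ue=\e\Delta\pe$. Set $s=5/3$ and $s'=5/2$, and for each $t\in(0,T)$ let $\phi^{\e}(t,\cdot)$ be the zero-mean solution of the Neumann problem
\begin{equation*}
-\Delta\phi^{\e}=|\pe|^{s-2}\pe-\frac{1}{|\Omega|}\int_{\Omega}|\pe|^{s-2}\pe\,dx,\qquad \partial_{n}\phi^{\e}=0\ \text{on }\Gamma.
\end{equation*}
Classical elliptic regularity in the $C^{1,1}$ domain and the three-dimensional Sobolev embedding $W^{1,s'}\hookrightarrow L^{15}$ give, pointwise in $t$, the $\e$-independent bounds $\|\nabla^{2}\phi^{\e}\|_{s'}\le C\|\pe\|_{s}^{s-1}$ and $\|\nabla\phi^{\e}\|_{15}\le C\|\pe\|_{s}^{s-1}$; moreover the condition $\partial_{n}\phi^{\e}=0$ precisely means that $\nabla\phi^{\e}$ is tangential to $\Gamma$, so it is an admissible test field in the weak formulation of~\eqref{eq:app}.

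Testing~\eqref{eq:app} against $\nabla\phi^{\e}$, the pressure term becomes $\int\nabla\pe\cdot\nabla\phi^{\e}\,dx=\|\pe\|_{s}^{s}$, the viscous term becomes $-2\int\D\ue:\nabla^{2}\phi^{\e}\,dx$ (the boundary contribution vanishing by the Navier condition $n\cdot\D\ue\cdot\tau=0$ applied with $\tau=\nabla\phi^{\e}$), and the nonlinear term, after writing $nl=\dive(\ue\otimes\ue)-\tfrac12\ue\dive\ue$, becomes $-\int(\ue\otimes\ue):\nabla^{2}\phi^{\e}\,dx-\tfrac12\int\dive\ue\,(\ue\cdot\nabla\phi^{\e})\,dx$. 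The crucial computation is the time-derivative term: integration by parts in space (using $\ue\cdot n=0$), substitution of $\partial_{t}\dive\ue=\e\,\Delta\partial_{t}\pe$, and a second integration by parts (using $\partial_{n}\pe=\partial_{n}\phi^{\e}=0$) give the exact derivative
\begin{equation*}
\int_{\Omega}\partial_{t}\ue\cdot\nabla\phi^{\e}\,dx=\e\int_{\Omega}\partial_{t}\pe\,|\pe|^{s-2}\pe\,dx=\frac{\e}{s}\,\frac{d}{dt}\|\pe\|_{s}^{s}.
\end{equation*}
Integrating in time on $(0,t)$ and noting that the compatibility $\dive\ue_{0}=0$, combined with uniqueness for the Neumann--Poisson problem, forces $\pe(0)=0$, one arrives at
\begin{equation*}
\int_{0}^{t}\!\!\|\pe\|_{s}^{s}\,ds+\frac{\e}{s}\|\pe(t)\|_{s}^{s}=\!\int_{0}^{t}\!\!\int_{\Omega}\!\bigl[(\ue\otimes\ue-2\D\ue):\nabla^{2}\phi^{\e}+\tfrac12\dive\ue\,(\ue\cdot\nabla\phi^{\e})\bigr]\,dx\,ds.
\end{equation*}

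Setting $A:=\int_{0}^{T}\|\pe\|_{s}^{s}\,ds$, the final step is to bound each term on the right by $C\,A^{\alpha}$ with $\alpha<1$, using the uniform energy bounds $\nabla\ue,\D\ue,\dive\ue\in L^{2}((0,T)\times\Omega)$, the Ladyzhenskaya-type interpolations $\ue\in L^{10/3}((0,T)\times\Omega)$ and $\ue\in L^{10}(0,T;L^{30/13}(\Omega))$, together with the elliptic and Sobolev bounds on $\phi^{\e}$ listed above. A careful H\"older bookkeeping in space and in time shows that every right-hand term is dominated by $C\,A^{2/5}$, and the resulting absorption inequality $A+\sup_{0<t<T}\e\|\pe(t)\|_{s}^{s}\le C\,A^{2/5}$ yields the claim at once. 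The hardest step is the estimate of the cross term $\int\dive\ue\,(\ue\cdot\nabla\phi^{\e})$: the plain Ladyzhenskaya bound is not enough, and one must combine the sharper interpolation $\ue\in L^{10}(0,T;L^{30/13}(\Omega))$ with the endpoint three-dimensional Sobolev bound $\|\nabla\phi^{\e}\|_{15}\le C\|\pe\|_{s}^{s-1}$ in order to bring the exponent of $A$ strictly below one. Secondary technical points are the rigorous justification of testing against $\nabla\phi^{\e}$ (handled by mollification in time plus a density argument) and the vanishing of the initial pressure, which relies on the compatibility $\dive\ue_{0}=0$.
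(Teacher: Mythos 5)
Your proposal is correct and follows essentially the same route as the paper: both test the momentum equation with the gradient of the zero-mean Neumann solution of $\pm\Delta g=|\pe|^{-1/3}\pe-\text{avg}$, convert the time-derivative term into $\tfrac{3\e}{5}\tfrac{d}{dt}\|\pe\|_{5/3}^{5/3}$ via $\dive\ue=\e\Delta\pe$ and the vanishing of $\pe(0)$, and close with the elliptic bound $\|\nabla^2 g\|_{5/2}\le C\|\pe\|_{5/3}^{2/3}$, the embedding $W^{1,5/2}\hookrightarrow L^{15}$, and H\"older/absorption. The only (immaterial) differences are that the paper keeps the convective term in the form $nl(\ue,\ue)\in L^{5/3}(0,T;L^{15/14})$ paired with $\nabla g\in L^{15}$ and absorbs via Young's inequality, while you integrate $\dive(\ue\otimes\ue)$ by parts once more and close via $A\le CA^{2/5}$.
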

\begin{proof}
  Let $\alpha=\frac{5}{3}$ and let $\g$ be the unique solution
  (normalized by a vanishing mean value) of the
  following Poisson problem with Neumann boundary conditions
\begin{equation}
  \label{eq:e31}
  \left\{
    \begin{aligned}
    \Delta
    \g&=|\pe|^{\alpha-2}\pe-\int_\Omega|\pe|^{\alpha-2}\pe\,dx\quad\textrm{
      in }\Omega\times\{t\},
    \\
    \frac{\partial \g}{\partial n}&=0\quad\textrm{ on }\Gamma\times\{t\}.
\end{aligned}             
\right. 
\end{equation}
\begin{remark}
  The number $\alpha-2$ is negative but the expression we write is
  legitimate since we are not really dividing by zero in sets of
  positive measure. In fact, by using the regularity of $\pe$ from
  Definition~\ref{def:weak-solution-compressible} it turns out that
  the function $|\pe|^{\frac{5}{3}-2}\pe$ is well-defined and belongs
  at least to $L^3((0,T)\times\Omega)$, even if not uniformly in
  $\e>0$.
\end{remark}
We use now the vector field $\nabla\g$ as test function in the first
equation of the system~\eqref{eq:app}. Note that, $\nabla\g$ is
tangential to the boundary and for each fixed $\e>0$ and we have that
$\nabla\g$ is smooth enough to make the integral below
well-defined. Let $t\in(0,T)$, then we get (recall~\eqref{eq:def:nl})
\begin{equation*}
  \int_0^t\int_{\Omega}\big(\partial_t\ue+nl(\ue,\ue)-2\dive\D\ue+
\nabla\pe\big)\cdot\nabla\g\,dx ds=0. 
\end{equation*}
Integrating by parts the term involving the deformation gradient by
using the Navier conditions~\eqref{eq:id1} and the one involving the
pressure and by using the definition of $\g$, the Neumann boundary
conditions, and the fact that $\pe$ has zero average we get
\begin{equation}
  \label{eq:l2}
  \int_0^t\int_\Omega|\pe|^{\frac{5}{3}}\,dx ds=\int_0^t\int_\Omega\Big[\big(\partial_t\ue
  +nl(\ue,\ue)\big)\cdot\nabla\g+2\,\D\ue:\nabla^2\g\Big]\,dx ds.  
\end{equation}
Let us consider the term with the time derivative. We can use the
Helmholtz decomposition to write
\begin{equation*}
  \ue=P\ue+\nabla q.
\end{equation*}
where $P$ is the Leray projector. Note that, since $\ue\cdot n=0$ on
$\Gamma\times(0,T)$ it holds that $n\cdot \nabla q=0$ on
$\Gamma\times(0,T)$ as well.  By using the second equation
of~\eqref{eq:app} we have that %
$ \Delta \pe=\frac{1}{\e}\dive(P\ue+\nabla q)=\frac{1}{\e}\dive\nabla
q=\frac{1}{\e}\Delta q$.
This means that for a.e. $t\in(0,T)$ we have in a weak sense
\begin{equation*}
\left\{
  \begin{aligned}
    \Delta
    \left(\pe-\frac{q}{\e}\right)&=0\qquad\textrm{in}\qquad\Omega\times\{t\},
    \\
    \frac{\partial}{\partial
      n}\left(\pe-\frac{q}{\e}\right)&=0\qquad\textrm{in}\qquad\Gamma\times\{t\}.
\end{aligned}
\right.
\end{equation*}
Then, by the uniqueness of the Neumann problem under the usual
normalization of zero mean value, we get that
\begin{equation*}
\pe=\frac{q}{\e}.
\end{equation*}
Then, we observe that since $\ue\cdot n=0$ on $\Gamma$ and $\dive
P\ue=0$ in $\Omega$, then
\begin{equation*}
  \begin{aligned}
    \int_0^t\int_{\Omega}\partial_t\ue\cdot\nabla\g\,dx
    ds&=-\int_0^t\int_{\Omega}(\dive\partial_t\ue)\,\g\,dx
    ds=-\int_0^t\int_\Omega\Delta \partial_t q\,\g\,dx ds
    \\
    &=-\e\int_0^t\int_\Omega\Delta\partial_t\pe\,\g\,dx ds.
  \end{aligned}
\end{equation*}
We then integrate by parts twice and by observing that $n\cdot\nabla\pe=0$,
(hence a fortiori also $n\cdot\nabla\partial_t\pe=0$) and $n\cdot\nabla\g=0$
we obtain 
\begin{equation*}
  \begin{aligned}
        \int_0^t\int_{\Omega}\partial_t\ue\cdot\nabla\g\,dx
    ds=-\e\int_0^t\int_\Omega\partial_t\pe\,\Delta\g\,dx ds.
  \end{aligned}
\end{equation*}
Hence, by recalling the definition of $\g$ via the boundary value
problem~\eqref{eq:e31} we get 
\begin{equation*}
  \begin{aligned}
        \int_0^t\int_{\Omega}\partial_t\ue\cdot\nabla\g\,dx ds&=
        -\e\int_0^t\int_\Omega\partial_t\pe(x,s)\
        \pe(x,s)|\pe(x,s)|^{\alpha-2} \,dx ds
    \\
    &=-\frac{\e}{\alpha}\int_0^t\left(\frac{d}{ds}\int_{\Omega}|\pe(x,s)|^{\alpha}\,dx\right)\,ds,
  \end{aligned}
\end{equation*}
where we used again the fact that $\pe$ has zero average for any
$t\in(0,T)$.

Then, by recalling that $\alpha=\frac{5}{3}$, by integrating in
time the last term, and taking into account the fact that all the
initial data $\{\ue_0\}_\e$ are divergence-free (hence $\pe(0)=0$) we
get that the equation~\eqref{eq:l2} reads now
\begin{equation*}
  \begin{aligned}
    \frac{3\e}{5}\|\pe(t)\|^{\frac{5}{3}}_{\frac{5}{3}}+\int_{0}^t\|\pe(s)\|_{\frac{5}{3}}^{\frac{5}{3}}\,ds&=
    \int_0^t\int_{\Omega}nl(\ue,\ue)\cdot\nabla\g\,dx ds
    \\
   &\hspace{3cm}+ 
    2\int_0^t\int_{\Omega}\D\ue:\nabla^2\g\,dx ds
    \\
    &=:I_1+I_2.
  \end{aligned}
\end{equation*}
We estimate the first integral from the right-hand side as follows: 
\begin{equation*}
\begin{aligned}
  I_1&\leq \int_0^t\|nl(\ue,\ue)\|_{\frac{15}{14}}\|\nabla\g\|_{15}\,ds
  \\
  &\leq
  \int_{0}^{t}\|nl(\ue,\ue)\|_{\frac{15}{14}}\|D^2\g\|_{\frac{5}{2}}\,ds
  \\
  &\leq
  \int_{0}^{t}\|nl(\ue,\ue)\|_{\frac{15}{14}}\|\pe\|_{\frac{5}{3}}^{\frac{2}{3}}\,ds
  \\
  &\leq C
  \int_{0}^{t}\|nl(\ue,\ue)\|_{\frac{15}{14}}^{\frac{5}{3}}\,ds+\frac{1}{8}\int_0^t\|\pe\|_{\frac{5}{3}}^{\frac{5}{3}}\,ds, 
\end{aligned}     
\end{equation*}
where we have used H\"older, Sobolev, and Young inequalities. Finally,
by convex interpolation we have that if $\ue\in
L^\infty(0,T;L^2(\Omega))\cap L^2(0,T;H^1(\Omega))$, then the
nonlinear term $nl(\ue,\ue)$ belongs to
$L^{\frac{5}{3}}(0,T;L^{\frac{15}{14}}(\Omega))$.

Hence, we have proved that there exists a constant $C_1$, depending
only on the $L^2$-norm of the initial data (through the energy
inequality~\eqref{eq:eiapp1}) such that
\begin{equation*}
  I_1\leq C+\frac{1}{4}\int_0^t\|\pe\|_{\frac{5}{3}}^{\frac{5}{3}}\,ds.
\end{equation*}
Concerning the term $I_2$ we have, by using H\"older inequality and
again the elliptic estimates   
\begin{equation*}
\begin{aligned}
  \int_0^t\int_\Omega|\nabla\ue||\nabla^2\g|\,ds&
  \leq \int_0^t\|\nabla\ue\|_{\frac{5}{3}}\|D^2\g\|_{\frac{5}{2}}\,ds
  \\
  &\leq
  C(\Omega)\int_0^t\|\nabla\ue\|_2\|\pe\|_{\frac{5}{3}}^{\frac{2}{3}}\,ds
  \\
  &\leq C(\Omega,
  T)\left(\int_0^t\|\nabla\ue\|_2^2\right)^{\frac{6}{5}}\,ds
  +\frac{1}{4}\int_0^t\|\pe\|_{\frac{5}{3}}^\frac{5}{3}\,ds
  \\
  &\leq C_2+\frac{1}{4}\int_0^t\|\pe\|_{\frac{5}{3}}^\frac{5}{3}\,ds,
\end{aligned}
\end{equation*}
for a constant $C_2$, depending only on $\Omega,\,T$, and the
$L^2$-norm of the initial data (again through the energy
inequality~\eqref{eq:eiapp1}).  Collecting all estimates, we finally
get
\begin{equation*}
  \frac{3\e}{5}\|\pe(t)\|_{\frac{5}{3}}^{\frac{5}{3}}
  +\frac{1}{2}\int_{0}^{t}\|\pe(s)\|_{\frac{5}{3}}^{\frac{5}{3}}\,ds\leq
  C_1+C_2,
\end{equation*}
ending the proof. By the way we also proved that
\begin{equation*}
  \e^{\frac{3}{5}}\pe\text{ is bounded uniformly in }L^\infty(0,T;L^{\frac{5}{3}}(\Omega)),
\end{equation*}
even if this information will be not used in the sequel.
\end{proof}
\section{Proof of the Main Theorem}\label{sec:proof}
By using the estimates from the previous section we can now prove
Theorem~\ref{teo:main} in an elementary way by using standard weak
compactness method.  By recalling~\eqref{eq:eiapp}-\eqref{eq:m2}
we have that, up to sub-sequences, there exist $u\in
L^{\infty}(0,T;L^{2}(\Omega))\cap L^{2}(0,T;H^{1}(\Omega))$ and $p\in
L^{\frac{5}{3}}((0,T)\times\Omega)$ such that
\begin{equation}
  \label{eq:basicconv}
  \begin{aligned}
    &\nabla\ue\rightharpoonup\nabla u\qquad\textrm{ weakly in
    }L^{2}(0,T\times\Omega),
    \\
    &\ue\overset{*}{\rightharpoonup} u\qquad\textrm{ weakly* in
    }L^{\infty}(0,T;L^2(\Omega)),
    \\
    &\pe\rightharpoonup p\qquad\textrm{ weakly in
    }L^{\frac{5}{3}}((0,T)\times\Omega). 
  \end{aligned}
\end{equation}
Moreover, by~\eqref{eq:eiapp} if follows that $\sqrt{\e}\,\nabla \pe$ is
uniformly bounded in $L^{2}((0,T)\times\Omega)$, and then we have that
${\e}^{\frac{1}{2}+\delta}\,\nabla \pe$ converges strongly to zero for all
positive $\delta$. Hence,  in particular, we have
\begin{equation}
  \label{eq:epconv}
  \e\,\nabla\pe\rightarrow 0\qquad\textrm{ strongly in
  }L^{2}((0,T)\times\Omega). 
\end{equation}
It also follows that $u$ is divergence-free. Indeed, by the second
equation of~\eqref{eq:app} we get
\begin{equation*}
\begin{aligned}
  \int_{0}^{T}\int_\Omega\dive\ue\,\psi\,dx
  ds&=\sqrt{\e}\int_{0}^{T}<\sqrt{\e}\Delta\pe,\psi>_{H^{-1},H^1_0}\,ds
  \\
  &=
  \sqrt{\e}\int_{0}^{T}\int_\Omega\sqrt{\e}\nabla\pe\cdot\nabla\psi\,dx
  ds
  \\
  &\leq
  \sqrt{\e}\int_{0}^{T}\|\sqrt{\e}\nabla\pe\|_{2}\|\nabla\psi\|_2\,ds
  \\
  &\leq
  \sqrt{\e}\|\sqrt{\e}\nabla\pe\|_{L^2(0,T;L^2(\Omega))}
  \|\nabla\psi\|_{L^2(0,T;L^2(\Omega))}.
\end{aligned} 
\end{equation*}
By taking the supremum over the functions $\psi\in
L^2(0,T;H^1_0(\Omega))$ and by using the estimate
on $\pe$ from~\eqref{eq:eiapp} we get that
\begin{equation*}
  \dive\ue\rightharpoonup 0\qquad\textrm{ weakly in }L^{2}(0,T;H^{-1}(\Omega)). 
\end{equation*}
Then, by the uniqueness of weak limits we get that $u$ is
divergence-free.  

The next step is to prove the strong convergence of $\ue$ in
$L^{2}((0,T)\times\Omega)$.  We can now use Aubin-Lions,
Lemma~\ref{lem:al}, provided that we can show some estimates on the
time derivative of the velocity, and these are usual obtained by
comparison.  We observe that $\pe$ is uniformly bounded in
$L^{\frac{5}{3}}((0,T)\times\Omega))$, the nonlinear term
$nl(\ue,\ue)$ is uniformly bounded in
$L^{\frac{4}{3}}(0,T;H^{-1}(\Omega))$, and $\dive\D \ue$ is uniformly
bounded in $L^{2}(0,T;H^{-1}(\Omega))$, hence we get that
\begin{equation*}
  \partial_t \ue\in L^{\frac{4}{3}}(0,T;W^{-1,\frac{5}{3}}(\Omega)), \text{ uniformly
    with respect to }\e.
\end{equation*}
Then, by applying Lemma~\ref{lem:al} with $X=W^{1,2}(\Omega)$,
$B=L^2(\Omega)$, and $Y=W^{-1,\frac{5}{3}}(\Omega)$ we get
\begin{equation}
  \label{eq:strong2}
  \ue\rightarrow u\qquad\textrm{ strongly in
  }L^{2}((0,T)\times\Omega).
\end{equation}
Then, we investigate the convergence of $P\ue$ and $Q\ue$, where $P$
is the Leray projector and $Q:=I-P$.  By applying $P$ to the first
equation of~\eqref{eq:app} we get that
\begin{equation*}
  \partial_t P\ue-P(\Delta
  \ue)+P((\ue\cdot\nabla)\,\ue)-P\left(\frac{1}{2}\,(\ue\dive\ue)\right)=0. 
\end{equation*}
From this equation we show, again by comparison, that
$\partial_{t}P\ue$ is bounded in
$L^{\frac{4}{3}}(0,T;(H^1_{\sigma,\tau}(\Omega))')$ uniformly with respect to
$\e$, and since $P\ue$ is bounded in
$L^{2}(0,T;H^{1}_{\sigma,\tau}(\Omega))$ we can apply again
Lemma~\ref{lem:al} to obtain in a very standard way that
\begin{equation*}
  P\ue\rightarrow Pu=u\quad \textrm{ strongly in }L^{2}((0,T)\times\Omega)).
\end{equation*}
Then, we have that 
\begin{equation*}
\begin{aligned}
  \|Q\ue\|_{2}&=\|\ue-P\ue\|_2=\|\ue-u+u-P\ue\|_2
  \\
                  &\leq\|\ue-u\|_2+\|Pu-P\ue\|_{2}\to 0,\qquad\text{as }\e\to0.
\end{aligned}
\end{equation*}
This in turn implies,
\begin{equation*}
  Q\ue\rightarrow 0\textrm{ strongly in }L^{2}((0,T)\times\Omega)).
\end{equation*}
The next step is to prove that $Q\ue$ converge to $0$ strongly in
$L^{\frac{5}{2}}((0,T)\times\Omega))$. By using an interpolation
inequality and a Sobolev embedding theorem together with the
Poincar\'e inequality (valid for tangential vector fields,
see~\cite{Gal2011}) we get
\begin{equation*}
  \|Q\ue\|_{\frac{5}{2}}^{\frac{5}{2}}\leq \|Q\ue\|_{2}^{\frac{7}{4}}\|Q\ue\|_{6}^{\frac{3}{4}}
  \leq C\|Q\ue\|_{2}^{\frac{7}{4}}\|\nabla Q\ue\|_{2}^{\frac{3}{4}},
\end{equation*}
and due to the definition of $Q\ue=\ue-P\ue=\nabla q$. By integrating
in time we get
\begin{equation*}
\begin{aligned}
  \int_{0}^{T}&\|Q\ue(s)\|_{\frac{5}{2}}^{\frac{5}{2}}\,ds
  \leq
  c\int_{0}^{T}\|Q\ue(s)\|_{2}^{\frac{7}{4}}\|\nabla\ue(s)\|_{2}^{\frac{3}{4}}\,ds
  \\
  %
  %
  &\leq\sup_{0<t< T}\|Q\ue(t)\|_{2}^{\frac{1}{2}}\left(\int_{0}^{T}\|Q\ue(s)\|_{2}^{2}\,ds\right)^{\frac{5}{8}}
  \left(\int_{0}^{T}\|\nabla\ue(s)\|_{2}^{2}\,ds\right)^{\frac{3}{8}}. 
\end{aligned}
\end{equation*}
Then, by using the regularity of $(\ue,\pe)$ and  construction of the
Helmholtz decomposition, it holds that $Q\ue\in
L^\infty(0,T;L^2(\Omega))$ and we get 
\begin{equation}
  \label{eq:convQ}
  Q\ue\rightarrow 0\qquad\textrm{ strongly in
  }L^{\frac{5}{2}}((0,T)\times\Omega)). 
\end{equation}
The last convergence we need to prove concerns the term
$\e\,\nabla\pe$. By considering the second equation of~\eqref{eq:app} we
get that
\begin{equation*}
\e\,\Delta\pe=\dive\ue=\dive Q\ue.
\end{equation*}
By standard using estimates on the elliptic equations with Neumann
boundary conditions, see~\cite{GT1998}, we get the following estimate:
\begin{equation*}
  \e\,\|\nabla\pe\|_{\frac{5}{2}}\leq \|Q\ue\|_{\frac{5}{2}}.
\end{equation*}
By integrating in time and using~\eqref{eq:convQ} we get  then
\begin{equation*}
  \e\left(\int_0^t\|\nabla\pe\|^{\frac{5}{2}}_{\frac{5}{2}}\right)^{\frac{2}{5}}\rightarrow 0.
\end{equation*}
By using~\eqref{eq:basicconv},~\eqref{eq:epconv},~\eqref{eq:strong2},
and~\eqref{eq:eiapp} it is straightforward to prove that $u$ is a
Leray-Hopf weak solution. In order to prove that $(u,p)$ is a suitable
weak solution of the Navier-Stokes
equations~\eqref{eq:nse}-\eqref{eq:bc} we only have to show is that
$(u,p)$ satisfies the local energy
inequality~\eqref{eq:generalized_energy_inequality}. Since $\phi\geq
0$, from~\eqref{eq:genapp} we have that $(\ue,\pe)$ satisfies
{
\begin{equation}
  \label{eq:gen1}
\begin{aligned}
  \int_{0}^{T}\int_\Omega |\nabla\ue|^{2}\phi\,dx
  ds&\leq\int_{0}^{T}\int_\Omega\Big[\frac{|\ue|^{2}}{2}(\phi_{t}+\Delta\phi)
  \\
  &+\big(\ue\frac{|\ue|^{2}}{2}
    +\pe\ue-\pe\nabla\pe-\ue\dive\ue \big)\cdot\nabla\phi\Big]\,dx ds.
\end{aligned}
\end{equation}}
By weak lower semicontinuity of the $L^{2}$-norm, the fact that
$\nabla\ue\rightharpoonup \nabla u$ weakly in 
$L^{2}((0,T)\times\Omega)$, and since $\phi\geq0$ we have that  
\begin{equation*}
  \int_{0}^{T}\int_\Omega|\nabla u|^{2}\,\phi\,dx ds\leq
  \liminf_{\e\rightarrow 0}\int_{0}^{T}\int_\Omega|\nabla
  \ue|^{2}\,\phi\,dx ds.
\end{equation*}
Since $\ue\rightarrow u$ strongly in $L^{2}((0,T)\times\Omega)$, we
get
\begin{equation*}
  \int_{0}^{T}\int_\Omega\frac{|\ue|^{2}}{2}\big(\phi_{t}+\Delta\phi\big)\,dx
  ds\rightarrow\int_{0}^{T}\int_\Omega\frac{|u|^{2}}{2}\big(\phi_{t}+\Delta\phi\big)\,dx
  ds,   \qquad\textrm{as $\e\rightarrow 0$}. 
\end{equation*}
Next, by interpolation we also have that $\ue\rightarrow u$ strongly
in $L^{2}(0,T;L^{3}(\Omega))$ and that $\ue$ is bounded in
$L^{4}(0,T;L^{3}(\Omega))$. Consequently, it also follows that
\begin{equation*}
  \int_{0}^{T}\int_\Omega
  \ue\frac{|\ue|^{2}}{2}\cdot\nabla\phi\,dx ds\rightarrow\int_{0}^{T}\int_\Omega
  u\frac{|u|^{2}}{2}\cdot\phi\,dx
  ds,\qquad\textrm{as $\e\rightarrow 0$}.
\end{equation*}
Now, we estimate the last two terms in~\eqref{eq:gen1}. We start by
estimating the term involving the pressure. We have that 
\begin{equation*}
  \pe\rightharpoonup p\textrm{ weakly in
  }L^{\frac{5}{3}}(\Omega\times(0,T)),
\end{equation*}
while by standard interpolation argument 
\begin{equation*}
\ue\rightarrow u\textrm{ strongly in }L^{\frac{5}{2}}(\Omega\times(0,T)).
\end{equation*}
These in turn imply that
\begin{equation*}
  \int_{0}^{T}\int_\Omega \pe\ue\cdot\nabla\phi\,dx
  ds\rightarrow\int_{0}^{T}\int_\Omega p\, u\cdot\nabla\phi\,dx ds,\qquad\textrm{as
    $\e\rightarrow 0$}. 
\end{equation*}
Concerning the integral 
\begin{equation*}
  A:=\e\int_{0}^{T}\int\pe\nabla\pe\cdot\nabla\phi\,dx ds, 
\end{equation*}
(quadratic in the pressure) we argue as follows: By using H\"older
inequality we get that
\begin{equation*}
\begin{aligned}
  |A|&\leq
  C\e\left(\int_{0}^T\|\nabla\pe\|_{\frac{5}{2}}^{\frac{5}{2}}\right)^{\frac{2}{5}}
\left(\int_0^T\|\pe\|_{\frac{5}{3}}^{\frac{5}{3}}\right)^{\frac{3}{5}}
  \\ 
  &\leq
  C\e\left(\int_{0}^T\|\nabla\pe\|_{\frac{5}{2}}^{\frac{5}{2}}\right)^{\frac{2}{5}},
\end{aligned}
\end{equation*}
and also  $A$ goes to $0$ as $\e\rightarrow 0$.

{
Finally, the we consider the last term, namely 
\begin{equation*}
B:=-\int_{0}^{T}\int_\Omega\ue\dive \ue\cdot\nabla\phi\,dx ds,
\end{equation*}
and since $\dive\ue$ converge weakly to $0$ in
$L^{2}(0,T;H^{-1}(\Omega))$ and $\nabla\ue$ is uniformly bounded in
$L^{2}(0,T;H^{1}(\Omega))$ we get, by uniqueness of the weak limits,
that
\begin{equation}
\dive\ue\rightharpoonup 0\quad\textrm{ weakly in }L^{2}((0,T)\times\Omega).
\end{equation}
By using the fact the $\ue\to u$ strongly in $L^{2}((0,T)\times\Omega)$
we have that $B$ goes to $0$ when $\e$ vanishes}, hence
that~\eqref{eq:generalized_energy_inequality} is satisfied. 
\def\cprime{$'$} \def\ocirc#1{\ifmmode\setbox0=\hbox{$#1$}\dimen0=\ht0
  \advance\dimen0 by1pt\rlap{\hbox to\wd0{\hss\raise\dimen0
  \hbox{\hskip.2em$\scriptscriptstyle\circ$}\hss}}#1\else {\accent"17 #1}\fi}
  \def\cprime{$'$} \def\polhk#1{\setbox0=\hbox{#1}{\ooalign{\hidewidth
  \lower1.5ex\hbox{`}\hidewidth\crcr\unhbox0}}}
\providecommand{\bysame}{\leavevmode\hbox to3em{\hrulefill}\thinspace}
\providecommand{\MR}{\relax\ifhmode\unskip\space\fi MR }
\providecommand{\MRhref}[2]{%
  \href{http://www.ams.org/mathscinet-getitem?mr=#1}{#2}
}
\providecommand{\href}[2]{#2}

\end{document}